\theoremstyle{plain}
\newtheorem*{theorem}{Theorem}
\newtheorem*{corollary}{Corollary}
\def    \R  {{\Bbb R}}
\def    \Z  {{\Bbb Z}}
\def    \CP {{\Bbb {CP}}}
\def    \C  {{\Bbb C}}
\begin{document}
\title[Spherical contact toric manifolds]{Spherical contact toric manifolds}

\author{Hui Li} 
\address{School of mathematical Sciences\\
        Soochow University\\
        Suzhou, 215006, China.}
        \email{hui.li@suda.edu.cn}

\thanks{2010 classification. Primary:  53D10, 53D20; Secondary:
55N10, 57R20}
\keywords{contact toric manifold, Reeb type, K-contact manifold, symplectic toric manifold, cohomology ring, first Chern class.}

\begin{abstract}
Let $(M, \alpha)$ be a $2n+1$-dimensional connected compact contact toric manifold of Reeb type.  Suppose the contact form $\alpha$ is regular, we find conditions under which $M$ is homeomorphic to $S^{2n+1}$.
\end{abstract}

 \maketitle
Let $(M, \alpha)$ be a contact manifold of dimension $2n+1$. If an $n+1$-dimensional torus $T^{n+1}$ acts on $M$ effectively, preserving the contact form $\alpha$, then $M$ is called a {\bf contact toric manifold}.
 If the Reeb vector field of the contact form $\alpha$ is generated by a one parameter subgroup of $T^{n+1}$, then the $T^{n+1}$-manifold $M$ is called a {\bf contact toric manifold of Reeb type}. Compact contact toric manifolds are classified by Lerman \cite{Lc}.  Most of the compact contact toric manifolds are of Reeb type. 

 Suppose $(M, \alpha)$ is a connected compact contact toric manifold of dimension $2n+1$. Let $\mathfrak t$ be the Lie algebra of the torus $T^{n+1}$, and $\mathfrak t^*$  the dual Lie algebra. The {\bf contact moment map}  $\phi\colon M\to\mathfrak t^*$ is defined to be
$$\langle\phi(x), X\rangle = \alpha_x\big(X_M(x)\big), \,\, \forall\, x\in M,\, \mbox{and}\,\,\, \forall \,\, X\in\mathfrak t,$$
 where $X_M$ is the vector field on $M$ generated by the $X$-action.
The {\bf moment cone} of $\phi$ is defined to be
$$C(\phi) =\big\{t\phi(x)\,|\, t\geq 0,\, x\in M\big\}.$$
It is known (\cite{{BG}, {Lc}} etc.) that, if $M$ is of Reeb type, 
then the moment map image is an $n$-dimensional convex polytope lying in a hyperplane away from the origin, and  $C(\phi)$ is a cone over the polytope, it is a {\it strictly convex rational polyhedral cone} of dimension $n+1$. {\it Strictly convex} means that $C(\phi)$ contains no linear subspaces of $\mathfrak t^*$ of positive dimension, {\it polyhedral} means that $C(\phi)$ is a cone over a polytope, and {\it rational} means that the normal vectors of the facets of the cone  lie in the integral lattice of $\mathfrak t$.  

Let $(M, \alpha)$ be a contact manifold. The contact form $\alpha$ is called 
{\bf regular} if the Reeb flow of $\alpha$ generates a {\it free} circle action.
The sphere $S^{2n+1}$ with its standard contact form $\alpha_0$ (the contact form induced from the standard symplectic form on $\C^{n+1}$) is a contact manifold with a regular contact form (see \cite[Example 4.9]{BG} for example). The sphere $(S^{2n+1}, \alpha_0)$ is a contact toric manifold of Reeb type, the $T^{n+1}$ action is the one induced from the standard $T^{n+1}$ action on $\C^{n+1}$.
The moment map image of $(S^{2n+1}, \alpha_0)$ is the standard simplex in $\R^{n+1}$, and the moment cone  is the cone over the standard simplex, which is the nonnegative coordinate zone.  For a contact toric manifold $(M, \alpha)$ of Reeb type, we can always find a suitable contact form $\alpha'$, which defines the same contact structure as $\alpha$, such that its Reeb flow generates a locally free circle action (see \cite{BG}).

In this paper, we study compact  contact toric manifolds of Reeb type with regular contact forms, we give some sufficient conditions under which the manifolds are homeomorphic to $S^{2n+1}$. 

\begin{theorem}
Let $(M,  \alpha)$ be a $2n+1$-dimensional connected compact contact toric manifold of Reeb type. Suppose the contact form is regular so that the Reeb flow generates a {\it free} $S^1$ action. 
If the orbit space $N=M/S^1$ of the Reeb flow  satisfies any one of the following two conditions, then $M$ is homeomorphic to  $S^{2n+1}$.
\begin{enumerate}
\item $H^*(N; \Z) = \Z[x]/x^{n+1}$, where $x=[\Omega]$, with $\pi\colon M\to N$ and $\pi^*\Omega = d\alpha$.
\item $c_1(N) = (n+1)[\Omega]$.  
\end{enumerate} 
\end{theorem}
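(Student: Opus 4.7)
The strategy is to realize $N=M/S^1$ as a symplectic toric manifold of dimension $2n$, show that each hypothesis forces $N\cong\CP^n$ with $[\Omega]$ a generator of $H^2(\CP^n;\Z)$, and then identify $\pi\colon M\to N$ with the Hopf $S^1$-bundle, concluding $M\cong S^{2n+1}$.

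\emph{Setup on the base.} Since $\alpha$ is regular and $M$ is compact, $\pi\colon M\to N$ is a principal $S^1$-bundle and $\alpha$ is a connection form. The 2-form $d\alpha$ is horizontal and $S^1$-invariant, so it descends to a closed form $\Omega$ on $N$ with $\pi^*\Omega=d\alpha$, and the contact condition $\alpha\wedge(d\alpha)^n\neq 0$ forces $\Omega^n\neq 0$, so $\Omega$ is symplectic; with the natural normalization, $[\Omega]\in H^2(N;\Z)$ is (up to sign) the Euler class of $\pi$. The residual torus $T^n=T^{n+1}/S^1$ acts effectively on $N$, preserves $\Omega$, and inherits a moment map from $\phi$ whose image is the $n$-dimensional Delzant polytope $\Delta$ obtained as a base of the moment cone $C(\phi)$. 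Thus $(N,\Omega,T^n)$ is a symplectic toric manifold, and the rest of the argument is about classifying $N$ under each hypothesis.

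\emph{Hypothesis (1).} The ring $\Z[x]/x^{n+1}$ has Euler characteristic $n+1$, and for a symplectic toric manifold $\chi(N)$ equals the number of $T^n$-fixed points, which in turn equals the number of vertices of $\Delta$. So $\Delta$ is an $n$-polytope with $n+1$ vertices, hence a simplex. Every Delzant simplex is $GL(n,\Z)\ltimes\R^n$-equivalent to a scaled standard simplex, so by Delzant's theorem $N\cong\CP^n$ as a symplectic toric manifold; the assumption $x=[\Omega]$ then identifies $[\Omega]$ with the generator of $H^2(\CP^n;\Z)$.

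\emph{Hypothesis (2).} The symplectic form $\Omega$ is compatible with the toric complex structure on $N$, so $[\Omega]$ is an integral K\"ahler, hence ample, class. The equation $c_1(N)=(n+1)[\Omega]$ then expresses $-K_N$ as $(n+1)$ times an ample class, so $N$ is a Fano manifold of Fano index $\geq n+1$. The Kobayashi--Ochiai theorem (or the toric classification of Fano varieties of maximal index) forces $N\cong\CP^n$, and then $(n+1)[\Omega]=c_1(\CP^n)=(n+1)L$ pins $[\Omega]$ to be the hyperplane generator $L$.

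\emph{Conclusion and main difficulty.} In both cases $\pi\colon M\to\CP^n$ is a principal $S^1$-bundle whose Euler class is a generator of $H^2(\CP^n;\Z)$; such a bundle is unique up to isomorphism and is the Hopf bundle, whose total space is $S^{2n+1}$. The main obstacle I expect is in hypothesis (2): verifying carefully that $[\Omega]$ is integral and ample on the toric $N$ and invoking Kobayashi--Ochiai (or a direct toric analogue) at the right level of generality to upgrade the numerical identity $c_1=(n+1)[\Omega]$ to a biholomorphism with $\CP^n$. Hypothesis (1) is essentially combinatorial once the toric reduction is in hand.
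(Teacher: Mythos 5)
Your proposal is correct, but for condition (1) it takes a genuinely different route from the paper, while for condition (2) it essentially reproduces the paper's argument. For (1), the paper never identifies $N$ with $\CP^n$: it proves $\pi_1(N)=1$ using a result on Hamiltonian $S^1$-manifolds, deduces $\pi_1(M)=1$ from the homotopy exact sequence of the bundle together with the primitivity of $[\Omega]$, computes $H^*(M;\Z)\cong H^*(S^{2n+1};\Z)$ via the Gysin sequence, and then invokes the generalized Poincar\'e conjecture (Smale/Kleiner--Lott) to get a homeomorphism. You instead extract only $\chi(N)=n+1$ from the ring hypothesis, conclude that the Delzant polytope is an $n$-simplex, and identify $N$ with $\CP^n$ by Delzant's classification; this avoids the Poincar\'e conjecture entirely and in fact yields a diffeomorphism $M\cong S^{2n+1}$, at the cost of relying on the toric structure of $N$ (which the paper's case (1) uses only to get $\pi_1(N)=1$) and on the lemma that every Delzant simplex is $AGL(n,\Z)$-equivalent to a rescaled standard simplex --- true, but worth a line of proof via the primitive edge vectors at two adjacent vertices. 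For (2), your chain (Delzant realization, the Guillemin--Sternberg K\"ahler structure on the reduction, ampleness of the integral class $[\Omega]$, Kobayashi--Ochiai, then classification of principal $S^1$-bundles over $\CP^n$ by their Euler class) matches the paper's, including its stated alternative ending via Kobayashi's bundle classification; the only point you should make fully explicit is the Boothby--Wang normalization guaranteeing that $[\Omega]$ is integral and is (up to sign) the Euler class of $\pi$, which you correctly flag but do not verify.
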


\begin{proof}[Proof of Theorem]
Since the Reeb flow corresponds to a free $S^1$-action,  $N=M/S^1$ is a smooth manifold; it is a compact symplectic toric manifold of dimension $2n$. Since
the fixed points of the $T^n$-action on $N$ are isolated, we may choose a subcircle action of the $T^n$-action on $N$ such that its moment map  has an isolated point as its minimum, by \cite{L0}, $N$ is simply connected, i.e., $$\pi_1(N)=1.$$

$\pi\colon M\to N$ is a principal $S^1$-bundle. By the theorems of Boothby and Wang \cite[Theorem 1]{BW}, and Kobayashi \cite{K},
$\alpha$ defines a connection form on $M$,
$\pi^*\Omega$ is the curvature form of this connection, $[\Omega]$ is an {\it integral} class, and it is the first Chern class of this circle bundle.

\medskip

First, suppose $N$ satisfies condition $(1)$, i.e., the integral cohomology ring of $N$ is 
$$H^*(N; \Z) = \Z[x]/x^{n+1}, \,\,\,\mbox{where $x=[\Omega]$}.$$
This first implies that $[\Omega]\in H^2(N; \Z)$ is a generator. 
We look at the homotopy exact sequence of the $S^1$-bundle $\pi\colon M\to N$
$$\cdots\to \pi_2(N)\to\pi_1(S^1)\to\pi_1(M)\to\pi_1(N)\to\cdots.$$
By considering the universal $S^1$-bundle $ES^1\to BS^1$, a classifying map $N\to BS^1$, and a diagram chasing, we see that the map 
$$\pi_2(N)= H_2(N; \Z) = H^2(N; \Z)=\Z\to\pi_1(S^1) = H_1(S^1;\Z) = H^1(S^1; \Z)=\Z$$
is multiplication by the first Chern class $[\Omega]\in H^2(N; \Z)=\Z$ of the $S^1$-bundle $\pi\colon M\to N$
(we need $\pi_1(N) = 1$ for the claim $\pi_2(N)= H_2(N; \Z)$). Since $[\Omega]$ is primitive, (up to a sign) the above map is multiplication by $1$, so the map
$$\pi_2(N)\to\pi_1(S^1)$$
 is surjective; since we also have $\pi_1(N) = 1$, by the above exact sequence, we get
$$\pi_1(M) = 1.$$
We now look at the Gysin exact sequence for the circle bundle $\pi\colon M\to N$, 
$$\cdots\to H^*(M; \Z)\to H^{*-1}(N; \Z)\to H^{*+1}(N; \Z)\to H^{*+1}(M; \Z)\to\cdots,$$
by the above integral cohomology ring structure of $N$, we know that the odd cohomology groups of $N$ are $0$ and the map $H^{*-1}(N; \Z)\to H^{*+1}(N; \Z)$ is an isomorphism for each $1\leq *\leq 2n-1$, so we get 
$$H^*(M; \Z) = \Z \,\,\,\mbox{when $*=0, 2n+1$},\,\,\,\mbox{and $H^*(M; \Z) = 0$ when $*\neq 0, 2n+1$}.$$ 
We proved that $M$ has the integral cohomology ring of $S^{2n+1}$.

Since $M$ is simply connected, and has the integral cohomology ring of $S^{2n+1}$, by the generalized Poincar\'e conjecture \cite{KL, S}, $M$ is homeomorphic to $S^{2n+1}$. (For dimension $3$, by the classification in \cite{Lc} for compact connected contact toric manifolds of Reeb type, the manifolds are diffeomorphic to the lens spaces. Since $M$ is simply connected, $M$ is diffeomorphic to $S^3$. This is an alternative argument for dimension $3$.)

\medskip

Next  suppose $N$ satisfies condition $(2)$, i.e., 
$c_1(N) = (n+1)[\Omega]$. 
The manifold $(N, \Omega)$, as a symplectic toric manifold, by Delzant's theorem (\cite{D}), is equivariantly symplectomorphic to the symplectic reduced space of the K\"ahler manifold $\C^m$ (for some $m > 0$) by a torus action which preserves the K\"ahler structure of $\C^m$. By \cite[Theorem 3.5]{GS}, this symplectic reduced space is a K\"ahler manifold. Hence $(N, \Omega)$ possesses a K\"ahler structure compatible with $\Omega$. Since $N$ is K\"ahler, and $c_1(N) = (n+1)[\Omega]$ with $[\Omega]$ being an integral K\"ahler class, by a theorem of Kobayashi and Ochiai  (\cite{KO}), 
$N$ is biholomorphic to $\CP^n$. The class $[\Omega]$ must be primitive, since if it was not,
then $c_1(N) \neq c_1(\CP^n)$, contradicting to $N$ being biholomorphic to $\CP^n$. Then $N$ has the integral cohomology ring of $\CP^n$, with $[\Omega]$ being the degee 2 generator; the rest of the proof then may go as in the last case.  Or alternatively, once we know that $N$ is biholomorphic to $\CP^n$, and the first Chern class of the $S^1$-bundle $M\to N$ is $[\Omega]$, with $[\Omega]$ being primitive integral, then as in the last case, we get that $\pi_1(M) = 1$; by the classification of Kobayashi (\cite[Theorem 8]{K}), when the total space is simply connected, an $S^1$-bundle over $\CP^n$ is classified by its first Chern class, hence $M$ is
homeomorphic to $S^{2n+1}$.
\end{proof}

\begin{corollary}
Let $(M,  \alpha)$ be a $2n+1$-dimensional connected compact contact toric manifold of Reeb type. Suppose the contact form $\alpha$ is regular so that the Reeb flow generates a {\it free} $S^1$ action. If the orbit space $N= M/S^1$ satisfies any one of the two conditions in the theorem, then the moment cone of $M$ has exactly $n+1$ number of facets, and the primitive
integral normal vectors of the facets span a $\Z$ basis of $\Z^{n+1}$.
\end{corollary}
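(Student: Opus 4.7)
The plan is to combine the cohomological output of the Theorem's proof with two structural facts: the standard identity relating the second Betti number of a symplectic toric manifold to the number of facets of its Delzant polytope, and the fact (part of Lerman's classification \cite{Lc}) that the moment cone of a compact connected contact toric manifold of Reeb type is a \emph{good} cone. The former yields the exact facet count $n+1$ and the latter yields the $\Z$-basis property.

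First I would record that under either hypothesis of the Theorem, $H^2(N;\Z) = \Z$ with generator $[\Omega]$: under (1) this is part of the hypothesis, and under (2) it follows from the fact---already established in the proof of the Theorem---that $N$ is biholomorphic to $\CP^n$ with $[\Omega]$ primitive. Since $N$ is a $2n$-dimensional compact symplectic toric manifold, Delzant's theorem realizes it as the symplectic toric manifold attached to a Delzant polytope $P\subset\R^n$, and the standard formula
$$\mathrm{rank}\,H^2(N;\R)\;=\;d-n,$$
where $d$ is the number of facets of $P$, forces $d = n+1$. In particular, $P$ is an $n$-simplex.

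Next I would note that the contact moment polytope $\phi(M)\subset\R^{n+1}$ sits in the affine hyperplane $\{\langle\,\cdot\,,R\rangle = 1\}$ determined by the Reeb vector $R$, and its combinatorial type coincides with that of $P$ via the obvious affine identification of this hyperplane with $\mathfrak{t}^*_{T^n}$; in particular $\phi(M)$ also has $n+1$ facets. Since the moment cone $C(\phi)$ is by definition the cone $\R_{\geq 0}\cdot\phi(M)$ with apex at the origin, its facets are precisely the cones over facets of $\phi(M)$, so $C(\phi)$ has exactly $n+1$ facets---the first claim of the corollary.

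For the $\Z$-basis claim, I would invoke Lerman's classification \cite{Lc} in its full strength: the moment cone of a compact connected contact toric manifold of Reeb type is a \emph{good} cone, which means that at every face $F$ of codimension $k$, the $k$ primitive integral normals of the facets containing $F$ form a $\Z$-basis of the rank-$k$ direct summand of $\Z^{n+1}$ that they span. Specializing to the apex $\{0\}$, which has codimension $n+1$ and is contained in all $n+1$ facets, this forces the $n+1$ primitive normals themselves to form a $\Z$-basis of $\Z^{n+1}$. The main (minor) subtlety is that the excerpt only records the strictly-convex-rational-polyhedral portion of Lerman's classification; recognizing that goodness---which is precisely what guarantees that $M$ is a smooth manifold rather than an orbifold---is also part of that classification is the essential additional input needed for this step.
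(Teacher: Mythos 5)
Your facet count is correct and is a legitimate shortcut past the paper's argument. The paper computes all the Betti numbers of $N$, takes a generic component of the moment map as a perfect Morse function to conclude that $N$ has exactly $n+1$ fixed points, hence that the moment polytope has $n+1$ vertices and therefore (being an $n$-dimensional polytope with $n+1$ vertices, i.e.\ a simplex) $n+1$ facets; your appeal to $\mathrm{rank}\,H^2(N;\R)=d-n$ reaches the same conclusion from $b_2(N)=1$ alone. Either way the identification of the facets of $\phi(M)$ with those of the polytope of $N$, and of the facets of $C(\phi)$ with cones over facets of $\phi(M)$, goes as you say.

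The second half of your proof, however, has a genuine gap: Lerman's goodness condition does \emph{not} apply at the apex $\{0\}$ of the moment cone. The lattice condition is imposed only on the faces of positive dimension, because it encodes smoothness of $M$ near the $T^{n+1}$-orbits, and the contact moment map image lies in $C(\phi)\setminus\{0\}$ (indeed $\langle\phi(x),R\rangle=\alpha_x(R)=1$ for the Reeb vector $R$, so no point maps to the apex). If goodness did impose the condition at the apex, then \emph{every} compact contact toric manifold of Reeb type whose cone has $n+1$ facets would have normals forming a $\Z$-basis --- but this already fails for the lens spaces $L(p;q)$ with $p>1$, whose moment cones are good, have $2$ facets in $\R^2$, and whose primitive normals have determinant $\pm p$. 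A telltale sign is that your argument for this step never uses the hypotheses of the corollary: the $\Z$-basis property genuinely depends on the simple connectivity of $M$. The paper's route is to quote from \cite{L} that, up to sign, $\pi_1(M)$ is determined by $\det[v_1,\dots,v_{n+1}]$ (equivalently $\pi_1(M)\cong\Z^{n+1}/\mathrm{span}_\Z\{v_1,\dots,v_{n+1}\}$), and to combine this with $\pi_1(M)=1$, established in the Theorem, to force the determinant to be $\pm1$. You need some input of this kind; goodness alone cannot deliver the conclusion.
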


\begin{proof}
When the orbit space $N$ satisfies any one of the two conditions of our theorem, by the proof, $N$ has the same Betti numbers as $\CP^n$, i.e.,  $b_{2i}(N) = 1$  and $b_{2i+1}(N) = 0$ for any $0\leq 2i\leq 2n$. 

The symplectic manifold $(N, \Omega)$
has an effective $T^n = T^{n+1}/S^1$ action. Let $\bar\phi$ be the moment map of the $T^n$-action on $N$. The fixed points of the $T^n$ action on $N$ are isolated, and the $n$-dimensional moment polytope $\bar\phi(N)$ is the convex hull of the images of the $T^n$ fixed points  (\cite{A, GS1}).
Since $d\alpha = \pi^*\Omega$, the polytopes $\bar\phi(N)$ and $\phi(M)$ differ by a constant in $\mathfrak t^*$.

We choose a generic element $\eta\in\mathfrak t' =$ Lie$(T^n)$. Then the moment map component $\bar\phi^{\eta}$ is a perfect Morse function on $N$. Together with the fact on the Betti numbers of $N$, we know that $\bar\phi^{\eta}$ has $n+1$ number of critical points, which are precisely the fixed points of the one parameter subgroup $\exp(t\eta)$-action, which are precisely the same as the fixed points of the $T^n$-action on $N$. So the moment polytope $\bar\phi(N)$ has $n+1$ number of vertices, and then it has $n+1$ number of facets. Then $\phi(M)$ has
$n+1$ number of facets, and the moment cone $C(\phi)$ has $n+1$ number of facets.

By our theorem, the assumptions imply that $\pi_1(M) = 1$.  Let $v_1, \cdots, v_{n+1}$ be  the primitive integral normal vectors of the facets of the moment cone, by \cite{L}, up to a sign,
$$\pi_1(M) = \det [v_1, \cdots, v_{n+1}].$$
Hence $\{v_1, \cdots, v_{n+1}\}$ forms a $\Z$-basis of $\Z^{n+1}$.
\end{proof}

 A  Riemannian metric $g$ on a contact manifold $(M, \alpha)$ is said to be {\it adapted} to $\alpha$ if $g$ is preserved by the Reeb flow of $\alpha$, and  if there exists an almost complex structure $J$ on  $\ker(\alpha)$ such that $g$, $J$ and $d\alpha$ are compatible on the symplectic subbundle 
$\ker(\alpha)$. In such a case, we call $(\alpha, g)$ a K-contact structure on $M$, or we call $(M, \alpha, g)$ a {\bf K-contact manifold}. A compact K-contact manifold $(M, \alpha, g)$ admits a torus action preserving $\alpha$, where the torus is the closure of the Reeb flow of $\alpha$ in the isometry group of $M$ (which is compact by 
\cite{MS}). If the compact K-contact manifold $(M, \alpha, g)$ is of dimension $2n+1$, then the dimension of the torus is bounded from above  by $n+1$ (\cite{R}). 
If $(M, \alpha)$ is a contact toric manifold of Reeb type, then we can choose an invariant Riemannian metric $g$ adapted to $\alpha$ such that $M$ is given a K-contact structure $(\alpha, g)$, hence is a K-contact manifold (see for example \cite{Lh}). So compact contact toric manifolds of Reeb type belong to the category of K-contact manifolds.

The sphere $(S^{2n+1}, \alpha_0)$ with the standard contact form $\alpha_0$ admits a (standard) K-contact structure. A perturbation of $\alpha_0$ can give us a different K-contact  structure on $S^{2n+1}$ such that the Reeb flow has exactly $n+1$ number of closed orbits (which are circles) \cite{R2}, this is the minimal number of closed Reeb orbits that a $2n+1$-dimensional compact K-contact manifold can have.

Let $(M, \alpha)$ be a $2n+1$-dimensional compact connected contact toric manifold of Reeb type with a regular contact form. Under the condition of our theorem, by the corollary, the moment cone of $(M, \alpha)$ has exactly $n+1$ number of facets, or the moment polytope $\phi(M)$ has $n+1$ number of facets. We identify the moment polytope of the $T^n=T^{n+1}/S^1$-action on $N=M/S^1$ with $\phi(M)$, and identify the Lie algebra of $T^n$ with a subalgebra
of $\mathfrak t$. We choose a generic element $\eta\in\mathfrak t' =$ Lie$(T^n)$, then $X = (\eta, \xi)$ is a generic element in $\mathfrak t$, where $\xi$ is the generator of the Lie algebra of the subgroup $S^1$. Let $X_M$ be the  generating vector field of  $X = (\eta, \xi)$ in $M$. Then the flow of $X_M$ has exactly $n+1$ number of closed orbits. By the choice of $X$ and by \cite[Theorem 7.2]{YK},  (may also see the proof of \cite[Prop. 1]{R2}), we can find a contact form $\alpha'$ on $M$ whose Reeb vector field is $X_M$. Then we have a (different) K-contact structure $(\alpha', g')$ on $M$ with exactly $n+1$ number of closed Reeb orbits. Hence the manifold we study in our theorem is a compact connected manifold of dimension $2n+1$ with a K-contact structure which has exactly $n+1$ number of closed Reeb orbits (minimal number of closed Reeb orbits).
Compact connected K-contact  manifolds of dimension $2n+1$ with exactly $n+1$ number of closed Reeb orbits are studied in \cite{R1, R2, R3}, where the theorems claim that 
such manifolds are finitely covered by $S^{2n+1}$, are homeomorphic to $S^{2n+1}$ when they are simply connected. However, a counter example of this claim is given in \cite{GNT}, the $7$-dimensional Stiefel manifold $V_2(\R^5)$, which is simply connected and is not homeomorphic to $S^7$.

\end{document}